\numberwithin{equation}{section}
\newtheorem{theorem}{Theorem}[section]
\newtheorem{proposition}[theorem]{Proposition}
\newtheorem{lemma}[theorem]{Lemma}
\theoremstyle{definition}
\newtheorem{remark}[theorem]{Remark}
\newenvironment{enumeratei}{\begin{enumerate}[\quad\upshape (i)]} {\end{enumerate}}
\newcommand\red[1] {\color{red}#1\color{black}}
\newcommand\magenta[1] {\color{magenta}#1\color{black}}
\newcommand\ssc [1] {^{(#1)}} 
\newcommand \tbf [1] {{\bf #1}}
\newcommand \length {\ell}
\newcommand \set [1] {\{#1\}}
\newcommand \br {\textup{br}}
\newcommand \bxi {\xi_{\textup B}} 
\newcommand \pxi {\xi_{\textup P}} 
\newcommand \kb {\kern-6pt}
\newcommand \eqeqref[1]{\overset{\eqref{#1}}= }
\newcommand \lesseqref[1]{\overset{\eqref{#1}}< }
\newcommand \nonparallel {\mathrel{\not{\kern-1.3pt\parallel}}}
\newcommand \datum {November 5th, 2019}
\begin{document}

\title
[{Medians are below joins}]
{Medians are below joins in semimodular lattices of breadth 2}

\author[G. Cz\'edli]{G\'abor Cz\'edli}
\email{czedli@math.u-szeged.hu}
\urladdr{http://www.math.u-szeged.hu/\textasciitilde{}czedli/}
\address{University of Szeged, Bolyai Institute, Szeged, HUNGARY 6720}

\author[R. C. Powers]{Robert C. Powers}
\email{rcpowe01@louisville.edu}
\address{Department of Mathematics,
University of Louisville, Louisville, Kentucky 40292 USA}

\author[J. M. White]{Jeremy M. White}
\email{jwhite07@spalding.edu}
\address{School of Natural Science,
Spalding University, Louisville, Kentucky 40203 USA}

\date{\red{\hfill \tbf{\datum}}}

\begin{abstract}
Let $L$ be a lattice of finite length and let $d$ denote the minimum path length metric on the covering graph of $L$. For any $\xi=(x_1,\dots,x_k)\in L^k$, an element $y$ belonging to $L$ is called a {\em median} of $\xi$ if the sum $d(y,x_1)+\cdots+d(y,x_k)$ is minimum. The lattice $L$ satisfies the \emph{$c_1$-median property} if, for any $\xi=(x_1,\dots,x_k)\in L^k$ and for any median $y$ of $\xi$, $y\leq x_1\vee\dots\vee x_k$. Our main theorem asserts that if $L$ is an upper semimodular lattice of finite length and the breadth of $L$ is less than or equal to $2$,  then $L$ satisfies the $c_1$-median property. Also, we give a construction that yields semimodular lattices, and we use a particular case of this construction to prove that our theorem is sharp in the sense that  $2$  cannot be replaced by $3$. 
\end{abstract}

\subjclass{06C10}

\keywords{Semimodular lattice, breadth, $c_1$-median property, covering path, join-prime element}

\thanks{This research was supported by the Hungarian Research, Development and Innovation Office under grant number KH 126581.}

\maketitle

\section{Introduction}
Given a lattice $L$ of finite length and  $\xi=(x_1,\dots,x_k)\in L^k$, an element $y\in L$ is called a {\em median} of $\xi$ if the sum $d(y,x_1)+\cdots+d(y,x_k)$ is minimum, where $d(y,x_i)$ stands for the path distance in the Hasse diagram of $L$. 
 Our goal is to prove that 
\begin{equation}\left.
\parbox{9cm}{
whenever $L$ is, in addition, upper semimodular and of breadth at most 2, to be defined later, then $y\leq x_1\vee \dots\vee x_k$ holds for every $k\geq 2$ and for any median $y$ of every $\xi=(x_1,\dots,x_k)\in L^k$;}\,\,\,\right\}
\label{pbxentree}
\end{equation}
see  our main result, Theorem~\ref{thmmain}, for more details.

\subsection{Outline}\label{subsectoutline} 
The paper is structured as follows. In Subsection~\ref{subsectsurvey}, we survey some earlier results on medians in lattices. Subsection~\ref{subsectbasics} recalls some definitions, whereby the paper is readable with minimal knowledge of Lattice Theory. 
In Section~\ref{sectionexample}, we give a new way of constructing semimodular lattices; see Proposition~\ref{propconstrsm}, which can be of separate interest. As a particular case of our construction, we present a semimodular lattice $L(n,k)$ with breadth $k$ and size $|L(n,k)|=2n^k -(n-1)^k$ for any integers $k\geq 3$ and $n\geq 4$ such that $L(n,k)$ fails to satisfy the $c_1$-median property. Section~\ref{sectwttLs} is devoted to two technical lemmas that will be used later. 
Finally, Section~\ref{sectmainresult} presents our main result, Theorem~\ref{thmmain}, which asserts somewhat more than \eqref{pbxentree}. Using the auxiliary statements proved in Sections~\ref{sectionexample} and \ref{sectwttLs}, Section~\ref{sectmainresult} concludes with the proof of Theorem~\ref{thmmain}.

\subsection{Survey}\label{subsectsurvey}
For any metric space $(X, d)$ and for any $k$-tuple $\xi=(x_1,\dots,x_k)$ belonging to $X^k$, $y \in X$ is called a {\em median of $\xi$} if
\begin{equation}
r(y, \xi) = \sum_{i=1}^k d(y,x_i)
\label{eqRmTnSs}
\end{equation}
is minimum. Medians are frequently used numerical attributes of, say,  (discrete) probability distributions, and they are interesting in other areas of mathematics and even outside mathematics; see, for example, Monjardet~\cite{M}.

The $k$-tuple $\xi$ above is called a {\em profile} and $\{\xi\}$ denotes the set of all elements belonging to the profile. Repetition among the $x_i$'s is permitted so $|\{\xi\}| \leq k$. The notation $M(\xi)$ is used for the set of all medians of $\xi$ and $r(y, \xi)$ is called the \emph{remoteness} of $y$ from $\xi$. One can view $M$ as a function with domain the set of all possible profiles and range the set of all nonempty subsets of $X$. In this case, $M$ is called the {\em median function} or the {\em median procedure}. The median function has been extensively studied and we refer the reader to Day and McMorris~\cite{dm} for more information about this function.

If $X$ is a lattice $L$ of finite length and $d$ is the minimum path length metric on the covering graph of $L$, then it is sometimes possible to describe a median set $M(\xi)$ explicitly. For example, if $L$ is a finite distributive lattice and $\xi=(x_1,\dots,x_k)\in L^k$, then
\begin{align*} M(\xi) &= [m(\xi), m^{\prime}(\xi)] = \{z \in L : m(\xi) \leq z \leq m^{\prime}(\xi)\}\text{ where}\cr
m(\xi) &= \bigvee \{\ \bigwedge_{i \in I} x_i : I \subseteq \{1, \ldots, k\}, |I| \geq \frac{k}{2} + 1\}\text{ and}\cr
m^{\prime}(\xi) &= \bigwedge \{\ \bigvee_{i \in I} x_i : I \subseteq \{1, \ldots, k\}, |I| \geq \frac{k}{2} + 1\}.
\end{align*}
This result is due to Barbut~\cite{bar} and Monjardet~\cite{M}. 
Their result was extended by Bandelt and Barth\-\'{e}lemy to median semilattices \cite{bb}. In addition, Barth\'{e}lemy showed that $M(\xi)$ is a sublattice of the \emph{interval} $[m(\xi), m^{\prime}(\xi)]$ if $L$ is a finite modular lattice \cite{jp}. In the case where $L$ is assumed to be a finite upper semimodular lattice, Leclerc \cite{lec} proved that $M(\xi)\subseteq [m(\xi), 1_L]$ for every $\xi\in L^k$. 
Leclerc also showed the converse. Specifically, if a finite lattice $L$ has the property that $M(\xi)\subseteq [m(\xi), 1_L]$ for every $\xi\in L^k$, then $L$ is upper semimodular. Leclerc's work was generalized to finite upper semimodular posets in \cite{pow}.

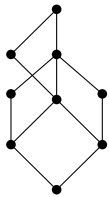
\begin{figure}[hbt]
\begin{tikzpicture}[scale=0.3]
\draw (2,0) coordinate (A);
\draw (0,2) coordinate (B);
\draw (4,2) coordinate (C);
\draw (2,4) coordinate (D);
\draw (0,4.25) coordinate (E);
\draw (4,4.25) coordinate (F);
\draw (0,6) coordinate (I);
\draw (2,6) coordinate (H);
\draw (2,8) coordinate (G);
\draw (A)--(B);
\draw (A)--(C);
\draw (B)--(D);
\draw (B)--(E);
\draw (C)--(D);
\draw (C)--(F);
\draw (E)--(H);
\draw (F)--(H);
\draw (D)--(H);
\draw (D)--(I);
\draw (G)--(H);
\draw (G)--(I);
\fill[black] (A) circle (6pt);
\fill[black] (B) circle (6pt);
\fill[black] (C) circle (6pt);
\fill[black] (D) circle (6pt);
\fill[black] (E) circle (6pt);
\fill[black] (F) circle (6pt);
\fill[black] (G) circle (6pt);
\fill[black] (H) circle (6pt);
\fill[black] (I) circle (6pt);
\end{tikzpicture}
\caption{A nonplanar  semimodular lattice of breadth two}
\label{fignpl}
\end{figure}

While  Leclerc \cite{lec} above gives a lower bound of $M(\xi)$, here we are interested in a reasonable upper bound. Namely, 
following White \cite{whi}, we will say that a lattice $L$ satisfies the {\em $c_1$-median property} if, for any $\xi=(x_1,\dots,x_k)\in L^k$,
\begin{equation}
y \leq c_1(\xi) := \bigvee _{i=1}^k x_i
\label{eqc1xi}
\end{equation}
for all $y\in M(\xi)$. This is obviously equivalent to $\bigvee M(\xi) \leq c_1(\xi)$. 
Since $m^{\prime}(\xi) \leq c_1(\xi)$ for all $\xi$, it follows that every finite \emph{modular} lattice satisfies the $c_1$-median property. 
Finite (upper) semimodular lattices are known to be graded.  (As usual,  ``semimodular'' will always mean ``upper semimodular''.)  Cz\'edli, Powers, and White~\cite{cpw} proved that
\begin{equation}
\text{every planar graded lattice satisfies the $c_1$-median property.}\label{eqtxtczpwh}
\end{equation}
Let us emphasize that a planar lattice is finite by definition; see  Gr\"atzer and Knapp~\cite[page 447]{GrKnI} or
  Cz\'edli and Gr\"atzer~\cite[page 92]{czgg}.
Clearly, \eqref{eqtxtczpwh} implies immediately that 
\begin{equation}\left.
\parbox{6cm}{planar semimodular lattices satisfy the $c_1$-median property.}\,\,\,\right\}
\label{eqpbxfrMczpWh}
\end{equation}
It belongs to the folklore and we will prove in  Section~\ref{sectwttLs} that
\begin{equation}
\text{every planar lattice is of breadth at most $2$.}
\label{eqtxtbr2planar}
\end{equation}
Hence  \eqref{pbxentree} is a generalization of \eqref{eqpbxfrMczpWh}. Furthermore, this is a proper generalization since there are non-planar finite semimodular lattices of breadth 2; see Figure~\ref{fignpl} for an example.
Note at this point that the class of 
all semimodular lattices of finite length and breadth 2 is plentiful since, for example,  Rival \cite{riv} proved that this class contains lattices with arbitrarily large finite width and length. Note also that a graded lattice need not be semimodular,  and so it is easy to see that none of  \eqref{pbxentree} and \eqref{eqtxtczpwh} implies the other one.

In 2000, Li and Boukaabar \cite{li} gave a semimodular lattice  with $101$ elements that fails to satisfy the $c_1$-median property; we will denote this lattice by $L_{\textup{LiBou}}$. Hence,  \eqref{pbxentree} cannot be extended to all semimodular lattices of finite length.  Our Theorem~\ref{thmmain} will assert even more: as $L(n,3)$ in Section~\ref{sectionexample} exemplifies, \eqref{pbxentree} cannot be extended  to finite length semimodular lattices of breadth 3. Note that Section~\ref{sectionexample} builds on the essence of $L_{\textup{LiBou}}$ but, in addition that we will show that $L(n,3)$ is of breadth 3, 
there is a significant difference between the two approaches. Namely, as opposed to \cite{li}, where
$L_{\textup{LiBou}}$ is defined by its  involved Hasse diagram, tedious work is needed to show that it is a lattice and it is semimodular, and most of this work is left to the reader, our argument proving the same properties of $L(n,3)$ does not rely on any diagram and it is easy to read.

It was proved in  White~\cite{whi} that 
\begin{equation}\left.
\parbox{7cm}{semimodular lattices of height at most 6 satisfy the $c_1$-median property.}
\,\,\,\right\}
\label{pbxWhhght6} 
\end{equation}
Each of the conditions given in  \eqref{pbxentree}, \eqref{eqtxtczpwh}, \eqref{eqpbxfrMczpWh}, and \eqref{pbxWhhght6}
determines an interesting class of semimodular lattices of finite length satisfying the $c_1$-median property. 
Although interesting additional such classes of semimodular lattices will hopefully be discovered in the future, we do not see much hope for a reasonable characterization of semimodular lattices of finite length that satisfy the $c_1$-median property.

\subsection{Basic concepts}\label{subsectbasics}
All the elementary concepts and notation not defined in this paper can easily be found in Gr\"atzer~\cite{ggbypicture}
or in its freely downloadable \emph{Part I. A Brief Introduction to Lattices and Glossary of Notation}  at \texttt{tinyurl.com/lattices101}, and also in Nation~\cite{nationbook}, freely available again. Alternatively, the reader can look into Davey and Priestley~\cite{dp} or  Stern~\cite{ste}.  
However, for convenience, we recall the following. 
A lattice $L$ is of \emph{finite length} if there is a 
nonnegative integer $n$ such that every chain of $L$ consists of at most $n+1$ elements; if so, then the smallest such $n$ is the \emph{length} of the lattice, denoted by $\length(L)$. 
A lattice of finite length is \emph{graded} if any two of its maximal chains have the same (finite) number of elements. A lattice $L$  is \emph{upper semimodular}, or simply \emph{semimodular}, if for every $x,y\in L$, the covering $x\wedge y\prec x$ implies $y\prec x\vee y$. The condition \emph{lower semimodular} is defined dually. It is well known that every semimodular lattice of finite length is graded. For $x,y\in L$, the distance between $x$ and $y$ in the undirected covering graph associated with $L$ is denoted by $d(x,y)$. 
It is straightforward to see that in a \emph{semimodular} lattice $L$ of finite length, for any $x,y,u,v,w\in L$, 
\begin{align}
&d(x,y)=d(x,x\vee y)+d(x\vee y,y)=\length([x,x\vee y])+\length([y,x\vee y])
\label{eqPndskszD}\\
&\text{and  }u\leq v\leq w\text{ implies that } d(u,w)=d(u,v)+d(v,w).
\label{alignchainddd}
\end{align}
The \emph{breadth} of a lattice $L$, to be denoted by $\br(L)$, is the least positive integer $n$ such that any join $\bigvee_{i=1}^m x_i$, $x_i\in L$, $m\geq n$, is always a join of $n$ of the joinands $x_i$.

\section{Semimodular constructs and an example}\label{sectionexample}

An element $u$ in a lattice $L$ is  \emph{join-irreducible} if for every $x,y\in L$, $u=x\vee y$ implies that $u=x$ or $u=y$. Similarly, if  $u\leq x\vee y$ implies that $u\leq x$ or $u\leq y$, then $u$ is  \emph{join-prime}. Finally, $u$ is \emph{codistributive} (or \emph{dually distributive})  if for every $x,y\in L$,   
$\,u\wedge (x\vee y)=(u\wedge x)\vee (u\wedge y)$; see, for example, \v{S}e\v{s}elja and Tepav\v{c}evi\v{c}~\cite{seseljatepavcevic} and Gr\"atzer\cite{ggfoundbook}. 
Clearly, a join-prime element is join-distributive. If an element is codistributive and join-irreducible, then it is  join-prime; see (the easy proof of) Nation~\cite[Theorem 8.6(1)]{nationbook}. So there are many examples of join-prime elements in lattices. Note that each of the three free generators of the 28-element free modular lattice is join-prime, join-irreducible, but not codistributive; see Gr\"atzer~\cite[Figure 20 in page 85]{ggfoundbook}.
Observe that, for every positive integer $t$ and any lattices $K_1$, \dots, $K_t$ of finite length, 
\begin{equation}\left.
\parbox{10cm}{a nonzero element $e=(e_1,\dots,e_t)\in K_1\times\dots\times K_t$ is join-prime if and only if there exists a unique $i=i(e)\in\set{1,\dots,t}$ such that $e_i$ is a nonzero join-prime element of $K_i$ and $e_j$ is the bottom element $0_j$ of $K_j$ for all $j\in\set{1,\dots,t}\setminus\set i$.}\,\,\right\}
\label{pbxzTdsmKpclFhZ}
\end{equation}
In order to verify \eqref{pbxzTdsmKpclFhZ}, assume that $e$ has at least two nonzero coordinates, say, $e_1$ and $e_2$.  Then $e\leq (e_1,0_2,\dots.0_t)\vee (0_1,e_2,\dots, e_t)$ witnesses that $e$ is not join-prime. The rest of the argument proving \eqref{pbxzTdsmKpclFhZ} is even more trivial and will not be detailed.

\begin{proposition}\label{propconstrsm} 
Let $K$ be a lattice of finite length.
\begin{enumeratei}
\item\label{lemmaconstrsma}
If $e$ is a nonzero join-prime element of  $K$, $f\in K$, and $e\leq f$, then the subposet $L:=K\setminus[e,f]$ of $L$ is a lattice.
\item\label{lemmaconstrsmb} If $t$ is a positive integer, 
 $K_1$, \dots, $K_t$  are  semimodular lattices of finite length, $K=K_1\times\dots\times K_t$ is their direct product, $e=(e_1,\dots, e_t)\in K$ is a nonzero join-prime element, $i=i(e)$ denotes the subscript defined  in \eqref{pbxzTdsmKpclFhZ}, and  $f=(f_1,\dots, f_t)$ is an element of $K$ such that $f_i$ is the top element $1_i$ of $K_i$,
then the subposet $L:=K\setminus[e,f]$ of $K$ is a semimodular lattice, and it is a join-subsemilattice of $K$.
\end{enumeratei}
\end{proposition}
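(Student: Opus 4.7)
The plan is to dispatch part (i) first, since (ii) depends on it. For joins in (i): if $x, y \in L$ but $x \vee_K y \in [e, f]$, then $x, y \leq f$ and $e \leq x \vee_K y$; join-primality of $e$ forces $e \leq x$ or $e \leq y$, placing one of them in $[e, f]$, a contradiction. Hence $x \vee_K y \in L$, and it serves as the $L$-join of $x$ and $y$. For meets in (i): given $x, y \in L$, if $x \wedge_K y \notin [e, f]$ it is already the $L$-meet; otherwise $z \leq x \wedge_K y \leq f$ holds for any lower bound $z \in L$, so $z \in L$ reduces to $z \not\geq e$. Join-primality of $e$ makes the set $T := \{z \in K : z \leq x \wedge_K y,\ z \not\geq e\}$ closed under pairwise joins, because $e \leq z_1 \vee z_2$ would force $e \leq z_1$ or $e \leq z_2$. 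Since $K$ has finite length (so ACC holds) and $T$ contains the bottom of $K$ (as $e$ is nonzero), $T$ has a maximum $\bigvee T \in T$, which is the $L$-meet of $x$ and $y$.

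For part (ii), part (i) already shows that $L$ is a lattice and joins coincide with those of $K$, so $L$ is a join-subsemilattice. The heart of the semimodularity argument is the claim that, for $a, b \in L$, $a \prec_L b$ iff $a \prec_K b$. The reverse direction is immediate. For the forward direction I would use the decomposition $[e, f] = \prod_{k=1}^{t} [e_k, f_k]$, where $[e_i, f_i] = [e_i, 1_i]$ (the key hypothesis $f_i = 1_i$) and $[e_j, f_j] = [0_j, f_j]$ for $j \neq i$. Let $J$ be the set of coordinates where $a_j < b_j$. If $|J| \geq 2$, pick distinct $j_1, j_2 \in J$ and form the one-coordinate intermediates $c^{(k)}$ modifying only coordinate $j_k$; a short case analysis on whether $i \in \{j_1, j_2\}$ shows that having both $c^{(k)} \in [e, f]$ would force $a \in [e, f]$, so some $c^{(k)} \in L$ sits strictly between $a$ and $b$, contradicting $a \prec_L b$. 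If $J = \{j\}$, then for any $c_j \in (a_j, b_j)$ the element $c$ with $c_k = a_k$ for $k \neq j$ and this $c_j$ is a $K$-intermediate; a case analysis on whether $j = i$ (using $f_i = 1_i$ together with the fact that $a$ or $b$ must already violate a condition defining $[e, f]$) shows $c \in L$, again contradicting $a \prec_L b$ unless no such $c_j$ exists, i.e., $a_j \prec_{K_j} b_j$ and hence $a \prec_K b$.

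With cover-preservation established, the semimodularity of $L$ transfers cleanly from that of $K$. Suppose $x \wedge_L y \prec_L x$; the claim yields $x \wedge_L y \prec_K x$. Since $x \wedge_L y \leq x \wedge_K y \leq x$, either $x \wedge_K y = x$---which forces $x \leq y$ and then $x \wedge_L y = x$, contradicting the cover---or $x \wedge_K y = x \wedge_L y$. In the latter case semimodularity of $K$ gives $y \prec_K x \vee_K y$, and since $x \vee_K y = x \vee_L y$ by part (i), cover-preservation lifts this to $y \prec_L x \vee_L y$, as desired. The main obstacle is the cover-preservation step itself: its case analysis is delicate and genuinely needs $f_i = 1_i$---without that hypothesis, ``new'' covers in $L$ can appear (as already happens in the setting of part (i)), and the simple transfer of semimodularity would break down.
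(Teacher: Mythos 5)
Your proposal is correct, and it follows the paper's overall strategy --- prove that $L$ is join-closed using the join-primality of $e$, show that covers in $L$ are covers in $K$, and then transfer semimodularity from $K$ to $L$ --- but the execution of the key step is genuinely different. For part (i), the paper simply notes that $L$ is a join-closed subposet of finite length containing the least element $0_K$ and concludes it is a lattice; your explicit construction of the meet as the maximum of the join-closed set $T$ of common lower bounds is precisely the proof of that general principle, so the two are equivalent. The real divergence is in the cover-preservation claim \eqref{pbxcvprsXt}. The paper argues by contradiction via the sandwich $x\prec_K e\vee x\leq f\wedge y\prec_K y$ and then exhibits the single witness $z=(x_1,y_2,x_3,\dots,x_t)$, whereas you split on the number of coordinates in which $a$ and $b$ differ: the multi-coordinate case dies because $c^{(1)}\wedge c^{(2)}=a$ and the interval $[e,f]$ is meet-closed (no hypothesis on $e$ or $f$ is needed there), and the hypothesis $f_i=1_i$ is used only in the single-coordinate case $j=i$. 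This version is arguably more transparent and isolates exactly where $f_i=1_i$ enters. Your concluding semimodularity transfer is a correct, spelled-out version of the exercise the paper cites; the only point you leave tacit is that $K=K_1\times\dots\times K_t$ is itself semimodular, which the paper records as \eqref{eqtxtWsrptRdzd} and which follows at once from \eqref{pbxPrCrlPr}.
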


Note that \eqref{pbxzTdsmKpclFhZ} and the assumptions of part \eqref{lemmaconstrsmb} above imply that $e\leq f$, whereby the interval $[e,f]$ in  \eqref{lemmaconstrsmb} makes sense. Note also that the case $t=1$ is also interesting, but this case would be  easier to prove than the general case  $t\in \set{1,2,3,\dots}$. 

\begin{proof} First, we are going to prove \eqref{lemmaconstrsma}. Since $0_K<e$,  
the subposet $L$ has a least element,  
$0:=0_K$. Observe that $L$ is of finite length since so is $K$.  Thus, to prove that $L$ is a lattice, it suffices to prove that $L$ is join-closed. So it suffices to show that $L$ is a join-subsemilattice of $K$. Suppose, for a contradiction,  that $x,y\in L$ but $x\vee y\notin L$.
Then $e\leq x\vee y\leq f$. Since $e$ is join-prime, we obtain that $e\leq x$ or $e\leq y$, and we can assume that $e\leq x$ by symmetry. This with  $x\leq x\vee y\leq f$ lead to $x\in [e,f]$, contradicting $x\in L$. 
Thus, $L$ is join-closed and part \eqref{lemmaconstrsma} holds.

Next, we turn our attention to \eqref{lemmaconstrsmb}. 
We can assume that $i=1$. Then, by \eqref{pbxzTdsmKpclFhZ},
\begin{equation}
e_1>0_1,\,\, e_2=0_2,\,\,\dots, \,\, e_t=0_t.
\label{eqZhWrsMBFs}
\end{equation}
We obtain from  part \eqref{lemmaconstrsma} that $L$ is a lattice.   We are going to show that
\begin{equation}\left.
\parbox{6.3cm}{whenever $\set{x,y}\subseteq L$ and $y$ covers $x$ in $L$,
then $y$ covers $x$ in $K$.}\,\,\,\right\}
\label{pbxcvprsXt}
\end{equation}
First of all, observe that 
for any $a,b\in K$, we trivially have that 
\begin{equation}\left.
\parbox{9.2cm}{$a\prec_K b$ if and only if $a_j\prec b_j$ for exactly one subscript $j$ and $a_s=b_s$ for every other subscript $s$;  note that this holds even if $K_1$, \dots, $K_t$ are not assumed to be semimodular.}
\,\,\,\right\}
\label{pbxPrCrlPr}
\end{equation}
For the sake of contradiction, suppose that $x\prec_L y$ but $x\not\prec_K y$. Then there is at least one element in $[e,f]\cap [x,y]$. Hence, for $a:=e\vee x$ and $b:=f\wedge y$, we have that $a\leq b$. 
Note that $x\leq a\leq b\leq f$, so $x\notin [e,f]$ yields that $e\not\leq x$. Similarly, $e\leq a\leq b\leq y$ and $y\notin [e,f]$ give that $y\not\leq f$. 
Since $a\in [e,f]$ but $x\notin[e,f]$, we have that $x<a$. If we had an $x'\in K$ such that $x<x'<a$, then $x<x'<a\leq b<y$ and $x\prec_L y$ would imply that $x'\notin L$, whereby $e\leq x'$ would lead to the contradiction
$a=e\vee x\leq x'<a$. Thus, $x\prec_K a$ in $K$. Similarly, $b\prec_K y$. Let us summarize:
\begin{equation}\left.
\begin{aligned}
x&\prec_K x\vee e=a\leq b=y\wedge f\prec_K y,\,\, 
\cr
e&\not\leq x,\,\,\, y\not\leq f,\,\,\,
e\leq y,\,\,\, x\leq f .
\end{aligned} \,\,\,\right\}
\label{alignedZrWt}
\end{equation}
Since $e\not\leq x$, \eqref{eqZhWrsMBFs} gives that $e_1\not\leq x_1$. We know from \eqref{alignedZrWt} that 
$x\leq f$, and so  we obtain that $x_2\leq f_2$, \dots, $x_t\leq f_t$. Hence, if we had that $x_2=y_2$, \dots, $x_t=y_t$, then we would get that $y\leq f$ since $f_1=1_1$, but $y\leq f$ would contradict \eqref{alignedZrWt}.  Thus, there is a subscript $j\in\set{2,\dots,t}$ such that $x_j<y_j$. By symmetry, we can assume that $j=2$, that is, $x_2<y_2$.  Take the element $z:=(x_1,y_2,x_3,\dots,x_t)$ in $K$. Since $e_1\not\leq x_1=z_1$,  we have that $e\not\leq z$, whereby $z\in L$. Using $x_2<y_2=z_2$, we obtain that $x<z$. Since $x<y$, we have that $z\leq y$. 
Using that $e_1\not\leq x_1=z_1$ but  \eqref{alignedZrWt} 
gives that $e_1\leq y_1$, it follows that  $z\neq y$. So $z<y$. 
Since $x<z$, $z<y$, and $z\in L$ contradict $x\prec_L y$, we conclude \eqref{pbxcvprsXt}.
 
Next, recall from Cz\'edli and Walendziak~\cite{czw} that
\begin{equation}
\text{the direct product of finitely many semimodular lattices is  semimodular.}
\label{eqtxtWsrptRdzd}
\end{equation}
This yields that $K$ is semimodular. This fact, \eqref{pbxcvprsXt}, and Exercise 3.1 in \cite{czgg} imply the semimodularity of $L$. 
This proves part  \eqref{lemmaconstrsmb} and completes the proof of Proposition~\ref{propconstrsm}.
\end{proof}

\begin{lemma}\label{lemmaProDct} 
For any integer $t\geq 2$ and non-singleton lattices $L_1,\dots, L_t$ of finite breadth,
\[\br(L_1\times\dots\times L_t)=\br(L_1)+\dots+\br(L_t).\]
\end{lemma}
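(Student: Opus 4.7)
The strategy is to prove both inequalities using the following reformulation of breadth, verifiable by a short induction on the number of joinands: $\br(L)=n$ is equivalent to $n$ being the least positive integer such that every finite join in $L$ can be rewritten as a join of at most $n$ of its joinands. In particular, when $\br(L)=n\geq 2$, there exist $n$ elements $x_1,\dots,x_n\in L$ with $\bigvee_{i\neq k}x_i<\bigvee_{i=1}^n x_i$ for every $k$; by dominating an arbitrary proper subjoin by one that omits a single element, this strengthens to $\bigvee_{i\in I}x_i<\bigvee_{i=1}^n x_i$ for every $I\subsetneq\{1,\dots,n\}$.

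For the upper bound $\br(\prod_jL_j)\leq\sum_j n_j$ with $n_j:=\br(L_j)$, I would take an arbitrary finite family $z_1,\dots,z_m$ in $L_1\times\cdots\times L_t$. For each $j$, the breadth of $L_j$ applied to the projected family $\pi_j(z_1),\dots,\pi_j(z_m)$ furnishes a set $I_j\subseteq\{1,\dots,m\}$ with $|I_j|\leq n_j$ and $\bigvee_{i\in I_j}\pi_j(z_i)=\bigvee_{i=1}^m\pi_j(z_i)$. Taking $I=I_1\cup\cdots\cup I_t$, of cardinality at most $n_1+\cdots+n_t$, the coordinatewise formula for joins in a direct product yields $\bigvee_{i\in I}z_i=\bigvee_{i=1}^m z_i$.

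For the lower bound, for each $j$ I would pick witnesses $x_1^{(j)},\dots,x_{n_j}^{(j)}$ as above together with an element $w_j\in L_j$ satisfying $w_j\leq x_i^{(j)}$ for all $i$ and $w_j<V_j:=\bigvee_i x_i^{(j)}$. If $n_j\geq 2$, the choice $w_j:=\bigwedge_i x_i^{(j)}$ works, because $w_j=V_j$ would force all $x_i^{(j)}$ equal and contradict non-reducibility; if $n_j=1$, then $L_j$ is a chain, and the assumption that $L_j$ has more than one element permits choosing $x_1^{(j)}$ strictly above some $w_j\in L_j$. I then define $\sum_j n_j$ elements $Z_i^{(j)}$ of the product by setting $\pi_s(Z_i^{(j)})=x_i^{(j)}$ when $s=j$ and $\pi_s(Z_i^{(j)})=w_s$ otherwise. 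Their total join is $(V_1,\dots,V_t)$, whereas any proper subfamily omits some pair $(k,j^*)$, making its $j^*$-coordinate equal to $w_{j^*}\vee\bigvee_{i\in I_{j^*}}x_i^{(j^*)}$ for some proper $I_{j^*}\subsetneq\{1,\dots,n_{j^*}\}$, which is strictly less than $V_{j^*}$ by the witness property.

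The main delicacy will be the $n_j=1$ case in the lower bound, where no genuine non-reducibility witness is available and one must actively exploit the non-singleton hypothesis to secure $w_j<V_j$. Once the witnesses and the $w_j$'s are in place, the remainder is routine coordinatewise bookkeeping that rides on the componentwise nature of joins in a direct product.
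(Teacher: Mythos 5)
Your proposal is correct and follows essentially the same route as the paper's proof: the upper bound by projecting to coordinates, choosing a small set of joinands for each factor, and taking the union; the lower bound by padding irredundant witnesses in each factor with a common lower bound $w_j$ (the meet when $n_j\geq 2$, an element strictly below the single witness when $n_j=1$, using the non-singleton hypothesis). The only cosmetic difference is that the paper reduces to $t=2$ and inducts, while you handle general $t$ in one pass.
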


Having no reference at hand, we present a straightforward proof of this easy lemma.

\begin{proof} We can assume that $t=2$, because then the lemma follows by induction. 
For $i\in\set{1,2}$, denote $\br(L_i)$ by $n_i$, and pick an $n_i$-element subset $\set{a\ssc i_1, \dots, a\ssc i_{n_i}}$ of $L_i$ such that no element of this subset is the smallest element of $L_i$ (which need not exist), and $b\ssc i:= a\ssc i_1\vee \dots\vee a\ssc i_{n_i}\in L_i$ is an \emph{irredundant join}, that is, none of the joinands can be omitted without making the equality false.
Pick $c\ssc i\in L_i$ such that $c\ssc i<b\ssc i$ and  $c\ssc i\leq a\ssc i_j$ for all $j\in\set{1,\dots, n_i}$; this is possible either because $n_i>1$ and we can let $c\ssc i=a\ssc i_1\wedge \dots\wedge a\ssc i_{n_i}$, or because $n_i=1$ and we can pick an element smaller than $a\ssc i_1$. Since the join $(b\ssc1,b\ssc2)$  of the elements 
$(a\ssc1_1, c\ssc2)$, $(a\ssc1_2, c\ssc2)$, \dots, $(a\ssc1_{n_1}, c\ssc2)$, 
$(c\ssc1, a\ssc2_1)$, $(c\ssc1, a\ssc2_2)$, \dots, $(c\ssc1, a\ssc2_{n_2})$ 
is clearly an irredundant join, $\br(L_1\times L_2)\geq n_1+n_2=\br(L_1)+\br(L_2)$. 
To prove the converse inequality, assume that $(w_1,w_2)=\bigvee S$ in $L_1\times L_2$ with $|S|\geq n_1+n_2$. For each $i\in\set{1,2}$, we can pick an $n_i$-element subset $T_i$ of $S$ such that $w_i=\bigvee_{v\in T_i}v_i$. Letting $T$ be an $(n_1+n_2)$-element subset of $S$ such that $T_1\cup T_2\subseteq T$, we have that $(w_1,w_2)\leq \bigvee T\leq \bigvee S=(w_1,w_2)$. Thus,  $\br(L_1\times L_2)\leq n_1+n_2=\br(L_1)+\br(L_2)$.  
\end{proof}

For integers $n\geq 4$ and $k\geq 3$, we define a lattice {$L(n,k)$ as follows. 
Let $C_n=\{0,1,2,\ldots, n-1\}$  be the $n$-element chain with the usual ordering from $\mathbb Z$. Let $K=K(n,k)$ be the $(k+1)$-fold direct product 
\[K=K(n,k)=C_n\times C_n\times \dots \times C_n\times C_2.
\]
After defining $e=(e_1,\dots,e_{k+1})$ and $f=(f_1,\dots,f_{k+1})$ 
by
\[e:=(0,\ldots, 0,1,0) \text{ and } f:=(n-2,\ldots, n-2, n-1,0),
\]
we define $L=L(n,k)$ as $K\setminus [e,f]$. At present, $L(n,k)$ is only a poset.

\begin{lemma}\label{lemmaProdBr}
For integers $n\geq 4$ and $k\geq 3$, $L(n,k)$ is a $(2n^k -(n-1)^k)$-element semimodular lattice of breadth $k$, and this lattice fails to satisfy the $c_1$-median property.
\end{lemma}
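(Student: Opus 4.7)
The plan is to establish the three assertions of the lemma in turn.

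\emph{Lattice, semimodularity, and cardinality.} I would apply Proposition~\ref{propconstrsm}\eqref{lemmaconstrsmb} with $K=K(n,k)$ viewed as the $(k+1)$-fold direct product of $k$ chains $C_n$ and one copy of $C_2$. Every element of a chain is join-prime, so $e_k=1$ is join-prime in $C_n$, and by \eqref{pbxzTdsmKpclFhZ} the element $e$ is join-prime in $K$ with $i(e)=k$. Moreover $f_k=n-1$ is the top of the $k$-th factor, so the hypothesis of the proposition is met, and $L$ is a semimodular lattice that is also a join-subsemilattice of $K$. The interval $[e,f]$ is a product of chain-intervals of sizes $n-1,\dots,n-1,n-1,1$, hence $|[e,f]|=(n-1)^k$, which gives $|L|=2n^k-(n-1)^k$.

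\emph{Breadth.} By Lemma~\ref{lemmaProDct}, $\br(K)=k+1$, so $\br(L)\le k+1$. The join of the $k$ canonical atoms of $K$ other than $e$ itself equals $(1,\dots,1,0,1)\in L$ and is plainly irredundant, so $\br(L)\ge k$. To exclude breadth $k+1$, I would suppose that some $k+1$ elements $z_1,\dots,z_{k+1}\in L$ join irredundantly (equivalently in $K$, since $L$ is join-closed in $K$). In the chain-product $K$, irredundancy forces each $z_i$ to strictly dominate a unique coordinate $s_i$, and $i\mapsto s_i$ is a bijection onto $\set{1,\dots,k+1}$; relabel so that $s_i=i$. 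Then $z_k$ has $z_{k,k}\ge 1$, hence $e\le z_k$; combined with $z_k\in L$, this forces $z_k\not\le f$, yielding either $z_{k,k+1}=1$ or $z_{k,j_0}=n-1$ for some $j_0<k$. Both cases contradict the strict domination of the corresponding coordinate: coordinate $k+1$ (in $C_2$), respectively coordinate $j_0$ (in $C_n$), cannot be exceeded. Hence $\br(L)=k$.

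\emph{Failure of the $c_1$-median property---the main obstacle.} My candidate profile is $\xi=(x_1,\dots,x_k)\in L^k$, where $x_k=0_K$ and, for $1\le j\le k-1$, the element $x_j$ carries $n-1$ in coordinates $j$ and $k$ and $0$ in every other coordinate. Each $x_j$ with $j<k$ has its $j$-th coordinate exceeding $f_j=n-2$, and $x_k=0$ has coordinate $k$ equal to $0<e_k$, so all $x_j$ lie in $L$, and $c_1(\xi)=(n-1,\dots,n-1,n-1,0)$. Because $L$ is a join-subsemilattice of $K$ whose covers coincide with those of $K$, a short coordinate argument (for comparable $y<z$ in $L$, build a maximal chain in $L$ by first raising a coordinate of $z$ that escapes $[e,f]$) gives $d_L=d_K$ on $L$; hence medians in $L$ are the elements of $L$ minimizing the coordinate-wise $L^1$ remoteness of $K$. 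A direct coordinate-by-coordinate computation identifies the unique $K$-median as $y^*=(0,\dots,0,n-1,0)\in[e,f]$. The single-coordinate escapes from $y^*$ have costs $(k-2)(n-1)$ for each coordinate $j<k$ (lifting the value to $n-1$) and for coordinate $k$ (dropping the value to $0$), but only $k$ for coordinate $k+1$ (lifting the value to $1$). Since $k\le(k-2)(n-1)$ whenever $n\ge 4$ and $k\ge 3$, the element $y:=(0,\dots,0,n-1,1)\in L$ is an $L$-median. Yet the $(k+1)$st coordinate of $y$ is $1$, while the $(k+1)$st coordinate of $c_1(\xi)$ is $0$, so $y\not\le c_1(\xi)$. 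The principal subtlety is ruling out cheaper multi-coordinate detours; this follows because each coordinate change contributes a non-negative increment to the remoteness, and every single-coordinate increment is already at least $k$.
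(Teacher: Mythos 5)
Your proof is correct. Its first two parts take essentially the paper's route: the lattice structure, semimodularity and the count $2n^k-(n-1)^k$ come from Proposition~\ref{propconstrsm} exactly as in the paper, and your breadth argument is the contrapositive of the paper's --- both hinge on the observation that the joinand responsible for the $k$-th coordinate satisfies $e\le w$, hence $w\not\le f$, hence it already attains the top value in coordinate $k+1$ or in some coordinate $j<k$, which kills either irredundancy (your version) or lets one explicitly discard a joinand (the paper's version). The genuinely different part is the counterexample to the $c_1$-median property. The paper fixes the three-element profile \eqref{eqmtxSxTdnQ} and verifies by the explicit inequality chain \eqref{alignremotey}--\eqref{eqtxtyikp1} that any $y$ with $r(y,\xi)<r(z,\xi)=3n$ lies in $[e,f]$; you take a $k$-element profile (which for $k=3$ has the same underlying set as the paper's) and argue conceptually: remoteness in a product of chains decomposes coordinatewise, the unique $K$-median $y^{\ast}=(0,\dots,0,n-1,0)$ sits inside the deleted interval, every element of $L$ must escape $[e,f]$ in at least one coordinate, and the cheapest escape (cost $k\le(k-2)(n-1)$) is flipping the last coordinate, landing at $z=(0,\dots,0,n-1,1)\not\le c_1(\xi)$. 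Your version is longer to set up but explains \emph{why} the construction works, and it makes the sharpness of the choice $n\ge 4$ visible. Two small points: the reduction of $d_L$ to $d_K$ is indeed needed (the paper uses it tacitly when computing \eqref{alignremotey}); it follows from the cover-preservation statement \eqref{pbxcvprsXt} together with \eqref{eqPndskszD} and gradedness, so your appeal to ``covers coincide'' is the right justification. Also, your closing sentence should say that every single-coordinate \emph{escape} increment is at least $k$ --- a non-escaping change such as raising $y_1$ from $0$ to $1$ costs only $k-2$ --- but the escape costs you list make the intended argument unambiguous and correct.
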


\begin{proof}
In a chain, every  element is join-prime. Thus, it follows from Proposition~\ref{propconstrsm} that 
$L=L(n,k)$ is a semimodular lattice. Clearly,
$|L|=|K|-|[e,f]| = 2n^k -(n-1)^k$. 

The $2^k$-element boolean lattice is isomorphic to, say, $\set{2,3}\times\dots\times \set{2,3}\times\set 1$, which is a join-subsemilattice of $L$. Hence, 
we obtain from Lemma~\ref{lemmaProDct} (or we conclude easily even without this lemma) that $\br(L)\geq k$. 
\magenta In order to prove the converse inequality, 
let $\mathcal W=\{w\ssc1,w\ssc2,\ldots,w\ssc m\}$ with $m\geq k+1$ be a collection of elements from $L$. Denote $\bigvee \mathcal W$ by $y$. It suffices to find an at most $k$-element subset $\mathcal W^\ast$ of $\mathcal W$ such that $\bigvee \mathcal W^\ast=y$. 
For each $i=1,\ldots,k+1$, we can find at least one $w\ssc {j_i}\in\mathcal W$ such that $y_i=w\ssc {j_i}_i$. Let $\mathcal W':=\set{w\ssc {j_1}, \dots,w\ssc {j_{k+1}}}$. Clearly, $\bigvee \mathcal W'=y$ and $|\mathcal W'|\leq k+1$. Suppose that $y_i=0$ for some $i\in\set{1,\dots, k+1}$. Then $\bigvee (\mathcal W'\setminus\set{w\ssc{j_i}})$ still equals $y$, so  
$\mathcal W'\setminus\set{w\ssc{j_i}}$ serves as $\mathcal W^\ast$. Now assume that every coordinate of $y$ is nonzero;
in particular, $y_{k+1}=1$.
We can also assume that $w\ssc {j_k}_{k+1}=0$ since otherwise the equality $w\ssc {j_k}_{k+1}=1$ would make $w\ssc{j_{k+1}}$ superfluous, that is, we could let $\mathcal W^\ast:=\mathcal W'\setminus\set{w\ssc{j_{k+1}}}$.
Since $w\ssc {j_k}_{k}=y_k\neq 0$ gives that $e\leq w\ssc {j_k}$ but $w\ssc {j_k} \notin [e,f]$, it follows that $w\ssc {j_k}\not\leq f$. This fact and $w\ssc {j_k}_{k+1}=0$ give that
$w\ssc {j_k}_{i}=n-1$ for some $i\in\set{1,\dots, k-1}$. So $n-1=w\ssc {j_k}_{i}\leq y_i=w\ssc {j_i}_{i}$, where the inequality turns into an equality since $n-1$ the largest element of $C_n$. Thus, we can let $\mathcal W^\ast:= \mathcal W'\setminus\set{w\ssc{{j_i}}}$. We have proved that $\br(L)=k$.

Next, to prove that $L$  does not satisfy the $c_1$-median property, let

\begin{equation}\left.
\begin{matrix}
x\ssc 0 =(& 0, &0,& 0,&\ldots,&0,&0,&0&),\cr
x\ssc 1=(&n-1,& 0,& 0,&\ldots,&0,&n-1,&0&), \cr
x\ssc 2=(&0,& n-1,&0,&\ldots,&0,&n-1,&0&),
\end{matrix}\,\,\right\}
\label{eqmtxSxTdnQ}
\end{equation}
and define $\xi:=(x\ssc 0, x\ssc 1,x\ssc 2)\in L^3$. Clearly, $c_1(\xi)=(n-1,n-1,0,\ldots,0,n-1,0)$; see \eqref{eqc1xi}.  By \eqref{eqRmTnSs} and \eqref{eqPndskszD}, the remoteness of an arbitrary $y=(y_1,y_2,\ldots,y_k,y_{k+1})\in L$ with respect to $\xi$ is
\begin{align}
r(y,\xi) & =   \sum_{i=1}^2 [ (n-1) - y_i + 2y_i] + \sum_{i=3}^{k-1} 3y_i + 2(n-1)-y_{k} \cr
&+3y_{k+1}  = 4(n-1) + y_1 + y_2 -y_{k} + 3y_{k+1} + \sum_{i=3}^{k-1} 3y_i.
\label{alignremotey}
\end{align}
Consider $z=(0,0,0,\ldots,0,n-1,1)\in L$. By \eqref{alignremotey} or trivially,
\begin{equation}
r(z,\xi)=2(n-1)+n-1+3=3n.
\label{eqsXi}
\end{equation}
We are going to show that, for every $y\in K=K(n,k)$, 
\begin{equation}
\text{$r(y,\xi)<r(z,\xi)$ implies $y\not\in L$.}
\label{eqtxtyzimpl} 
\end{equation}
Suppose that $r(y,\xi)<r(z,\xi)$. Thus, using
$y_k\leq n-1$,  \eqref{alignremotey}, and \eqref{eqsXi}, we obtain after rearranging and simplifying that
\begin{equation}
n + y_1 + y_2  + 3y_{k+1} + \sum_{i=3}^{k-1} 3y_i <  y_{k}+4 \leq n-3.
\label{rem2}
\end{equation}
This implies that
$y_1 + y_2 + 3\cdot\bigl(y_{k+1}+ \sum_{i=3}^{k-1}y_i \bigr)  <  3$, whereby 
\begin{equation}\left.
\parbox{6.5cm}{$y_i=0$ for $i\in\set{3,4\dots,k-1,k+1}$ and 
$y_i\leq 2 \leq n-2$ for $i=1,2$.}\,\,\,\right\}
\label{eqtxtyikp1}
\end{equation}
The first inequality in (\ref{rem2}) together with $n\geq4$  yield that  that $1\leq y_{k}$. This fact and \eqref{eqtxtyikp1} imply that $y\in [e,f]$, that is, $y\notin L$. Consequently, \eqref{eqtxtyzimpl} holds, and so $z\in M(\xi)$. Since $z\not\leq c_1(\xi)$, it follows that $L$ does not satisfy the $c_1$-median property.
\end{proof}

For  lattices $(L';\leq')$ with top $1'$ and $(L'';\leq'')$ with bottom $0''$, their \emph{glued sum} 
is defined to be $\bigl((L'\setminus\set{1'})\cup\set{1'=0''} \cup( L''\setminus\set{0''});\leq\bigr)$ where $x'\leq y''$ for any $(x',y'')\in L'\times L''$ and  the restriction of $\leq$ to $L'$ and that to $L''$ are $\leq'$ and $\leq''$, respectively. Saying in a pragmatical way for the finite case: we put the diagram of $L''$ atop that of $L'$ and we identify $1'$ with $0''$. For example, the glued sum of the 2-element chain and the 3-element chain is the 4-element chain.
The following remark is a trivial consequence of the case $(n,k)=(4,3)$ of Lemma~\ref{lemmaProdBr}; note that the proof of this particular case would not be significantly shorter than that of  Lemma~\ref{lemmaProdBr}.

\begin{remark}\label{remarkGlS}
For $k>3$, we can easily construct a finite semimodular lattice $G(k)$ of breadth $k$ such that $G(k)$ does not satisfy the $c_1$-median property and its size is  less than $|L(4,k)|=2\cdot 4^k-3^k$.
Namely, let $G(k)$ be the glued sum of $L(4,3)$ and the $2^k$-element boolean lattice; its size is $|G(k)|=2\cdot 4^3-3^3+2^k-1=2^{k}+100$.
\end{remark}

\section{Two technical lemmas}\label{sectwttLs}
Before formulating two technical lemmas, we prove \eqref{eqtxtbr2planar}, simply because we could not find any reference to this almost trivial statement. 
\begin{proof}[Proof of  \eqref{eqtxtbr2planar}]
For the sake of contradiction, suppose that $L$ is a planar lattice but not of breadth at most 2. Then we can take a join $x_1\vee \dots \vee x_n=:y$ in $L$ 
such that $n\geq 3$ but $y\neq x_i\vee x_j$ for any $i,j\in\set{1,\dots, n}$. Since $\set{x_1,\dots, x_n}$ is clearly not a chain, we can assume that $x_1$ and $x_2$ are incomparable (in notation, $x_1\parallel x_2$) and  $x_1\vee x_2$ is a maximal element of $\set{x_i\vee x_j:  \set{i,j}\subseteq \set{1,\dots, n}}$. There is a $t\in\set{3,\dots,n}$ such that $x_t\not\leq x_1\vee x_2$ since otherwise we would have that $y=x_1\vee x_2$. 
We claim that $H:=\set{x_1\vee x_2, x_1\vee x_t, x_2\vee x_t}$ is a three-element antichain. Since $x_t\not\leq x_1\vee x_2$, 
we have that $x_i\vee x_t\not\leq x_1\vee x_2$ for $i\in\set{1,2}$. In particular, $x_i\vee x_t\neq x_1\vee x_2$. So if we had $x_1\vee x_2\leq x_i\vee x_t$, then  $x_1\vee x_2 < x_i\vee x_t$ would contradict the maximality of $x_1\vee x_2$. 
If we had that $x_1\vee x_t\nonparallel x_2\vee x_t$, say,  $x_1\vee x_t\leq  x_2\vee x_t$, then $x_1\vee x_2\leq (x_1\vee x_t)\vee(x_2\vee x_t)=x_2\vee x_t$ would lead to an already excluded case. So $H$ is a three-element antichain. We know from, say, Gr\"atzer~\cite[Lemma 73]{ggfoundbook} that $H$ generates a sublattice isomorphic to the eight-element boolean lattice. This contradicts the planarity of $L$ by, say, Kelly and Rival~\cite{kellyrival}.
\end{proof}

The next two lemmas will be needed later in the paper. 

\begin{lemma}[White \cite{whi}] 
Let $L$ be a semimodular lattice of finite length. If $\xi=(x_1,x_2)\in L^2$, then for all $x\in M(\xi)$, $x\leq x_1\vee x_2$.
\label{lem3}
\end{lemma}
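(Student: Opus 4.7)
The plan is to reduce the statement to a single numerical inequality and deliver that inequality by upper semimodularity. First I would fix the auxiliary elements
\[
c := x_1 \vee x_2,\qquad u_i := y \vee x_i\ \ (i=1,2),\qquad v := y \vee c = u_1 \vee u_2,
\]
and the abbreviations $\alpha := \length([y,v])$, $\beta := \length([c,v])$, and $\gamma_i := \length([u_i,v])$. Using \eqref{eqPndskszD} to write $d(y,x_i) = \length([y,u_i]) + \length([x_i,u_i])$ and then telescoping along the chains $y \leq u_i \leq v$, $x_i \leq u_i \leq v$, and $x_i \leq c \leq v$ (all admissible by grading via \eqref{alignchainddd}), I expect each summand to collapse to $d(y,x_i) = \alpha + \beta + \length([x_i,c]) - 2\gamma_i$. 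Adding over $i=1,2$ and recognising $\length([x_1,c]) + \length([x_2,c]) = d(x_1,x_2)$ from \eqref{eqPndskszD} should yield the closed form
\[
r(y,\xi) \;=\; d(x_1,x_2) + 2\bigl(\alpha + \beta - \gamma_1 - \gamma_2\bigr).
\]

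Because $c$ itself satisfies $r(c,\xi) = d(x_1,x_2)$, it is already a median candidate, so any $y \in M(\xi)$ is forced to make the correction $2(\alpha + \beta - \gamma_1 - \gamma_2)$ nonpositive. The task thus splits into (a) showing $\alpha + \beta \geq \gamma_1 + \gamma_2$ unconditionally, and (b) upgrading this to a strict inequality when $y \not\leq c$. Item (b) is free, since $y \not\leq c$ forces $v > c$ and hence $\beta \geq 1$.

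Item (a) is the only place semimodularity is needed, and it is where I expect the crux of the argument to live. The plan is to invoke the standard upper-semimodular length inequality $\length([a, a \vee b]) \leq \length([a \wedge b, b])$ with $a := u_1$ and $b := u_2$. Since $u_1 \vee u_2 = v$, $y \leq u_1 \wedge u_2$, and $y \leq u_2 \leq v$, this chains into
\[
\gamma_1 \;=\; \length([u_1, v]) \;\leq\; \length([u_1 \wedge u_2, u_2]) \;\leq\; \length([y, u_2]) \;=\; \alpha - \gamma_2,
\]
so $\gamma_1 + \gamma_2 \leq \alpha$. Combined with $\beta \geq 1$ in the case $y \not\leq c$, this gives $r(y,\xi) \geq r(c,\xi) + 2$, contradicting $y \in M(\xi)$. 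The only real obstacle is choosing the correct intermediate element $v = y \vee c$ and bookkeeping the three telescoping identities cleanly; after that, the semimodular estimate is a one-line calculation.
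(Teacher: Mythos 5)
Your argument is correct and complete. Note first that the paper itself offers no proof of Lemma~\ref{lem3}; it is quoted from White's thesis \cite{whi}, so there is nothing internal to compare against --- what you have done is supply a self-contained proof that the paper omits. Checking the details: the closed form $r(y,\xi)=d(x_1,x_2)+2(\alpha+\beta-\gamma_1-\gamma_2)$ follows exactly as you describe from \eqref{eqPndskszD} and the additivity of $\length$ along chains in a graded lattice (which is \eqref{alignchainddd}); the identity $r(c,\xi)=d(x_1,x_2)$ is immediate; and the submodularity of the height function in a semimodular lattice of finite length does give $\length([u_1,u_1\vee u_2])\leq\length([u_1\wedge u_2,u_2])\leq\length([y,u_2])=\alpha-\gamma_2$, hence $\gamma_1+\gamma_2\leq\alpha$. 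Combined with $\beta\geq 1$ when $y\not\leq c$, this yields $r(y,\xi)\geq r(c,\xi)+2$, so no median can lie outside $\mathord{\downarrow}(x_1\vee x_2)$. The one presentational caveat is that the height-submodularity inequality $\length([a,a\vee b])\leq\length([a\wedge b,b])$ is not among the facts the paper records in \eqref{eqPndskszD}--\eqref{alignchainddd}, so you should either cite it (it is standard, e.g.\ in Stern~\cite{ste} or Gr\"atzer~\cite{ggfoundbook}) or note that it follows from upper semimodularity by induction on $\length([a\wedge b,b])$. It is also worth observing that your computation proves slightly more than the lemma states, namely the quantitative bound $r(y,\xi)\geq r(x_1\vee x_2,\xi)+2\,\length([x_1\vee x_2,\,y\vee x_1\vee x_2])$ for every $y$.
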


Let $L$ be a lattice and $\xi=(x_1,\ldots,x_k)\in L^k$. Recall that $\{\xi\}$ denotes the set $\set{x_1,\ldots,x_k}$. Suppose $z\in L$ with $z\not\leq c_1(\xi)$. We note that for each $x_i\in\set\xi$ it is the case that $x_i\parallel z$ or $x_i<z$ . Let 
\begin{equation}\left.
\parbox{6.5cm}{
$\pxi=\{i: x_i\in \set\xi\ {\rm and}\ x_i\parallel z\}$ and\\ $\bxi=\{i: x_i\in \set\xi\ {\rm and}\ x_i<z\}$;}\,\,\,\right\}
\label{pbxbxNDpx}
\end{equation} 
the subscripts come from  ``parallel'' and ``below'', respectively.  Note that $|\pxi|+|\bxi|=k$.

\begin{lemma}
Let $L$ be a semimodular lattice of finite length. Let $\xi=(x_1,\ldots, x_k)\in L^k$ and $z\in L$ such that $z\not\leq c_1(\xi)$. If $|\pxi| \leq |\bxi|$, then $z\not\in M(\xi)$.
\label{AleqB}
\end{lemma}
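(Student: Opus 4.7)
My plan is to exhibit a single element that strictly beats $z$ in remoteness, namely $y := z \wedge c_1(\xi)$. Setting $c := c_1(\xi)$, the hypothesis $z \not\leq c$ gives $y < z$; let $n := d(z,y) \geq 1$. The point of this choice is that $y$ still dominates every $x_i$ with $i \in \bxi$ (because $x_i \leq z$ and $x_i \leq c$ force $x_i \leq y$), while $y$ itself lies below $c$ by construction.

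The main engine is \eqref{eqPndskszD} combined with the fact that in a semimodular lattice of finite length the length of any interval coincides with the height difference of its endpoints. Writing $h(u) := \length([0_L, u])$ for the resulting height function, these facts combine into the compact identity $d(u,v) = 2h(u\vee v) - h(u) - h(v)$. Applying it to $(y, x_i)$ and to $(z, x_i)$ and subtracting yields
\[
d(y, x_i) - d(z, x_i) = n - 2\delta_i, \qquad \delta_i := h(z\vee x_i) - h(y \vee x_i) \geq 0.
\]
I would then split according to \eqref{pbxbxNDpx}. For $i \in \bxi$ the inclusions $x_i \leq z$ and $x_i \leq c$ yield $x_i \leq y$, hence $y\vee x_i = y$, $z\vee x_i = z$, $\delta_i = n$, and the contribution is exactly $-n$. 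For $i \in \pxi$, both $y, x_i \leq c$ give $y\vee x_i \leq c$, whereas $z \vee x_i \geq z \not\leq c$ forces $y\vee x_i < z\vee x_i$, so $\delta_i \geq 1$ and the contribution is at most $n-2$. Summing,
\[
r(y,\xi) - r(z, \xi) \leq n(|\pxi| - |\bxi|) - 2|\pxi|.
\]

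The step that requires the most care, and that dictates the choice of $y = z \wedge c$ rather than an arbitrary lower neighbor of $z$, is obtaining the strict inequality $\delta_i \geq 1$ for every parallel index: it delivers the $-2|\pxi|$ term and therefore produces a \emph{strict} improvement even in the tight boundary case $|\pxi| = |\bxi|$. Under the hypothesis $|\pxi| \leq |\bxi|$ the term $n(|\pxi| - |\bxi|)$ is non-positive, so the right-hand side is at most $-2 < 0$ as soon as $|\pxi| \geq 1$. If instead $|\pxi| = 0$, then every $x_i$ is strictly below $z$, which forces $c \leq z$, hence $y = c$ with $n \geq 1$; the bound then becomes $-n|\bxi| < 0$, since $k = |\bxi| \geq 1$. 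In every case $r(y,\xi) < r(z,\xi)$, so $z \notin M(\xi)$.
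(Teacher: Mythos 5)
Your proof is correct, and it takes a genuinely different route from the paper's. The paper improves on $z$ by the element $y=\bigvee\set{x_i: i\in\bxi}$ and estimates $r(y,\xi)$ via triangle inequalities, which by themselves only give $r(y,\xi)\leq r(z,\xi)$; the necessary strict drop is then manufactured separately by a designated pair of indices (one from $\pxi$, one from $\bxi$), and this forces a case analysis that outsources $|\pxi|=0$ to Lemma~2.2 of \cite{cpw} and $|\pxi|=|\bxi|=1$ to Lemma~\ref{lem3}. You instead take the (generally larger) witness $y=z\wedge c_1(\xi)$ and replace the triangle inequalities by the exact identity $d(u,v)=2h(u\vee v)-h(u)-h(v)$, which is just \eqref{eqPndskszD} rewritten via the height function (legitimate, since a semimodular lattice of finite length satisfies the Jordan--H\"older chain condition, so $\length([a,b])=h(b)-h(a)$). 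This makes every per-index contribution computable: exactly $-n$ for $i\in\bxi$ because $x_i\leq z$ and $x_i\leq c_1(\xi)$ force $x_i\leq y$, and at most $n-2$ for $i\in\pxi$ because $y\vee x_i\leq c_1(\xi)$ while $z\vee x_i\not\leq c_1(\xi)$ gives $\delta_i\geq 1$. The strictness then comes for free from the $-2|\pxi|$ term (or from $-n|\bxi|$ when $|\pxi|=0$), so your argument is uniform over all cases and self-contained, needing neither Lemma~\ref{lem3} nor the external lemma from \cite{cpw}. What the paper's approach buys in exchange is that it avoids the height function entirely, working only with the metric axioms, \eqref{eqPndskszD}, and \eqref{alignchainddd}; what yours buys is a shorter, case-free computation with an explicit quantitative gap $r(z,\xi)-r(y,\xi)\geq 2|\pxi|$ (resp.\ $\geq n|\bxi|$).
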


\begin{proof}
If $|\pxi| = 0$, then $z > c_1(\xi)$. By Lemma 2.2 in \cite{cpw}, $z \not\in M(\xi)$. From now on we will assume that $|\pxi| \geq 1$ and so $z \parallel c_1(\xi)$. If $|\pxi|=|\bxi|=1$, then $z\not\in M(\xi)$ follows from Lemma \ref{lem3}. Assume that $|\bxi|\geq2$ and let $y:=\bigvee \{x_i\in \set\xi : x_i<z\}=\bigvee\set{x_i: i\in\bxi}$. Since $y\leq c_1(\xi)$, $y\leq z$, and $z\parallel c_1(\xi)$, it is the case that $y<z$. We observe that for each $x_i\in\set\xi$ with $x_i\parallel z$ (that is, for each $i\in\pxi$) the triangle inequality gives that
\begin{equation}
d(y,x_i)\leq d(y,z)+d(z,x_i),
\label{eqsTdTrngWlhWf}
\end{equation}
and for each $x_i\in\set\xi$ with $x_i<z$ (that is, for each $i\in\bxi$) \eqref{alignchainddd} implies that
\begin{equation}
d(y,x_i)=d(z,x_i)-d(y,z).
\label{equ1}
\end{equation}
We may assume without loss of generality that 
$1\in\pxi$ and so $x_1\parallel z$. Note that $y\vee x_1\leq z\vee x_1$. Since $y\vee x_1\leq c_1(\xi)$ and $z\vee x_1\not\leq c_1(\xi)$ it follows that $y\vee x_1<z\vee x_1$. Thus 
\begin{equation}
d(y,y\vee x_1)<d(y,z\vee x_1)\ {\rm and}\ d(y\vee x_1,x_1)<d(z\vee x_1,x_1).
\label{eqstDvSkzgh}
\end{equation}
We may assume that $2\in\bxi$ and so $x_2<z$. Using \eqref{eqPndskszD}, \eqref{eqstDvSkzgh}, and  the triangle inequality at $\leq'$, we get    
\allowdisplaybreaks{
\begin{align}
d(y,x_1)+d(y,x_2)  \eqeqref{eqPndskszD} {} &d(y,y\vee x_1)+d(y\vee x_1,x_1)+d(y,x_2)\cr
                  \lesseqref{eqstDvSkzgh} {} & d(y,z\vee x_1) + d(z\vee x_1,x_1) +  d(y,x_2)  \cr
                   \leq' \kern 2pt {} & d(y,z)+ d(z,z\vee x_1)+ d(z\vee x_1,x_1)+ d(y,x_2) \cr
                   \eqeqref{eqPndskszD} {} &  d(z,x_1) +  d(z,y)+ d(y,x_2) \cr
                 \eqeqref{alignchainddd} {} & d(z,x_1)+d(z,x_2),\qquad\text{ whereby}\cr
d(y,x_1)+d(y,x_2)<\phantom{'}  {} & d(z,x_1)+d(z,x_2).
\label{alignlLn12}
\end{align}
}

Finally, let $\pxi'=\pxi\setminus\{1\}$ and let $\bxi'=\bxi\setminus\{2\}$. Using the  inequality $|\pxi'| \leq |\bxi'|$ at $\leq'$, we get  the following calculation.
\allowdisplaybreaks{
\begin{align*}
r(y,\xi)  =  {}& \sum_{i\in\pxi} d(y,x_i)+\sum_{i\in\bxi} d(y,x_i)\\
           =  {}& \sum_{i\in\pxi'} d(y,x_i)+ d(y,x_1) +\sum_{i\in\bxi'} d(y,x_i)+d(y,x_2)\\
             \overset{(\ref{eqsTdTrngWlhWf},\ref{equ1})}\leq   {}&\sum_{i\in\pxi'} d(z,x_i)+|\pxi'|\cdot d(y,z)+d(y,x_1){}+{}\\
            {}& \sum_{i\in\bxi'} d(z,x_i)-|\bxi'|\cdot d(z,y)+d(y,x_2)  \\
          \leq'{}& 	 \sum_{i\in\pxi'} d(z,x_i)+d(y,x_1) + \sum_{i\in\bxi'} d(z,x_i)+d(y,x_2) \\
           \lesseqref{alignlLn12}  {}&  \sum_{i\in\pxi'} d(z,x_i)+ d(z,x_1) +\sum_{i\in\bxi'} d(z,x_i)+d(z,x_2)  =   r(z,\xi).
\end{align*}
}
Hence $r(y,\xi)<r(z,\xi)$, and so $z\not\in M(\xi)$, as required.
\end{proof}

Note that in  the proof of Lemma~\ref{lemmaProdBr}, where  $\xi$ is given in  \eqref{eqmtxSxTdnQ} modulo notational changes  and $z=(0,\ldots,0,n-1,1)$, we have $|\pxi|=2>1=|\bxi|$. Therefore the restriction $|\pxi|\leq |\bxi|$ given in Lemma \ref{AleqB} cannot be dropped.

\goodbreak

\section{Main result}\label{sectmainresult}

In harmony with the general convention that the empty join is the least element, note that 
the breadth of the singleton lattice is $0$.

\begin{theorem}\label{thmmain}\ 
\begin{enumeratei}
\item\label{thmmaina} Let $L$ be a semimodular lattice of finite length. If $L$ is of breadth at most $2$, then  $L$ satisfies the $c_1$-median property.
\item\label{thmmainb} For each integer $k\geq 3$, there exists a finite semimodular lattice of breadth $k$ that  fails to satisfy the $c_1$-median property. 
\item\label{thmmainc} Let $t$ be a positive integer. For $i=1,\dots,t$, let $L_i$ be a lattice of finite length satisfying the $c_1$-median property. Then the direct product $L:=L_1\times\dots\times L_t$ is a  lattice of finite length  and it also satisfies the $c_1$-median property. If all the $L_i$ are of finite breadth, then $\br(L)=\br(L_1)+\dots+\br(L_t)$. 
Furthermore, if all the $L_i$ are semimodular, then so is $L$.
\end{enumeratei}
\end{theorem}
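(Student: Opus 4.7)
The plan is to tackle the three parts separately; parts (ii) and (iii) are built from already-established material, while part (i) contains all the new work.

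For part (ii), nothing beyond invoking Lemma~\ref{lemmaProdBr} is needed: for each $k\geq 3$, the lattice $L(4,k)$ is a finite semimodular lattice of breadth $k$ failing the $c_1$-median property. For part (iii), write $L=L_1\times\cdots\times L_t$. Finite length is obvious since lengths add in a direct product, semimodularity (under the corresponding hypothesis) is \eqref{eqtxtWsrptRdzd}, and the breadth formula is Lemma~\ref{lemmaProDct} after absorbing possible singleton factors, whose breadth is $0$. For the $c_1$-median property, the decisive observation is that \eqref{pbxPrCrlPr} exhibits the covering graph of $L$ as the Cartesian product of the covering graphs of the $L_i$, so that
\[d_L(u,v)=\sum_{i=1}^t d_{L_i}\bigl(u^{(i)},v^{(i)}\bigr)\]
for all $u,v\in L$. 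Hence the remoteness function decomposes as $r_L(y,\xi)=\sum_{i=1}^t r_{L_i}(y^{(i)},\xi^{(i)})$ with $\xi^{(i)}:=(x_1^{(i)},\dots,x_k^{(i)})$, minimization in $L$ reduces to coordinate-wise minimization in each $L_i$, and applying the $c_1$-median property in each factor yields $y^{(i)}\leq c_1(\xi)^{(i)}$, whence $y\leq c_1(\xi)$.

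For part (i), I argue by contradiction. Assume $z\in M(\xi)$ with $z\not\leq c_1(\xi)$. Lemma~\ref{AleqB} forces $|\pxi|>|\bxi|$; in particular $|\pxi|\geq 1$ and $z\parallel c_1(\xi)$. Using $\br(L)\leq 2$ applied to the join $c_1(\xi)=\bigvee_i x_i$, pick indices $a,b\in\{1,\dots,k\}$ with $c_1(\xi)=x_a\vee x_b$. If $\{a,b\}\subseteq\bxi$ then $c_1(\xi)=x_a\vee x_b\leq z$, contradicting $z\not\leq c_1(\xi)$; so at least one of $a,b$ lies in $\pxi$, say $a\in\pxi$. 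The plan now is to exhibit $y\in L$ with $r(y,\xi)<r(z,\xi)$, thereby refuting $z\in M(\xi)$. A natural candidate is $y_0:=z\wedge c_1(\xi)<z$ (possibly supplemented by the join with $x_b$ when $b\in\bxi$); estimate $r(z,\xi)-r(y,\xi)$ by means of \eqref{eqPndskszD}, \eqref{alignchainddd}, the triangle inequality, and distance identities analogous to \eqref{eqstDvSkzgh}, so that the $|\bxi|$ indices below $z$ contribute a saving as in \eqref{equ1} while the two breadth witnesses $a$ and $b$ contribute a further strict saving coming from the proper inclusion $y_0<z$.

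The main obstacle is precisely the distance bookkeeping in the regime $|\pxi|>|\bxi|$ where the strategy of Lemma~\ref{AleqB} stops working on its own. The key structural lever is that $\br(L)\leq 2$ forces $x_i\leq x_a\vee x_b$ for every index $i$, so every joinand with $i\notin\{a,b\}$ is ``absorbed'' into the two witnesses; converting this algebraic absorption into a strict distance inequality in the spirit of~\eqref{eqstDvSkzgh}, uniformly over the surplus parallel indices, is the delicate step. A case distinction on whether $b\in\pxi$ or $b\in\bxi$, mirroring the splitting used in Lemma~\ref{AleqB}, should cover all remaining configurations and close the argument.
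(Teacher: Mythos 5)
Parts (ii) and (iii) of your proposal are fine and follow the paper's route: (ii) is exactly Lemma~\ref{lemmaProdBr} with $n=4$, and for (iii) the paper likewise derives $d(u,v)=\sum_i d(u^{(i)},v^{(i)})$ from \eqref{pbxPrCrlPr} (by an explicit path-splitting argument), decomposes the remoteness, and minimizes coordinatewise. The problem is part (i), where you have not actually closed the case $|\pxi|>|\bxi|$ --- you describe it yourself as ``the delicate step'' and leave it open --- and the strategy you sketch for it points in a direction that is unlikely to work. Your candidate $y_0:=z\wedge c_1(\xi)$ is a \emph{downward} move, and the only general estimate available for a parallel index is the triangle inequality $d(y_0,x_i)\leq d(y_0,z)+d(z,x_i)$, i.e.\ a potential \emph{loss} of $d(y_0,z)$ per index of $\pxi$, against a gain of exactly $d(y_0,z)$ per index of $\bxi$. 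Since you are precisely in the regime $|\pxi|>|\bxi|$, this bookkeeping yields a net bound of the wrong sign, and the ``further strict saving'' you hope to extract from the two breadth witnesses $a,b$ is capped at $d(y_0,z)$ per witness, so it cannot compensate once $|\pxi|-|\bxi|\geq 3$. Note also that \eqref{eqPndskszD} is a join-based distance formula; semimodularity gives you no meet-based analogue, so there is no reason $d(z\wedge c_1(\xi),x_i)\leq d(z,x_i)$ for $x_i\parallel z$.

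The paper uses the breadth hypothesis in a genuinely different way. Instead of applying $\br(L)\leq 2$ to the join $c_1(\xi)=\bigvee_i x_i$, it applies it to the joins $z\vee x_i\vee x_j$ for $i,j\in\pxi$: since $z\vee x_i\vee x_j=x_i\vee x_j$ would force $z\leq c_1(\xi)$, breadth $2$ yields $z\vee x_i\vee x_j\in\set{z\vee x_i,\,z\vee x_j}$, so $T=\set{z\vee x_i: i\in\pxi}$ is a \emph{chain}. Taking its least element $z\vee x_j$ (an \emph{upward} move from $z$) and using \eqref{eqPndskszD} along the chain $z\leq z\vee x_j\leq z\vee x_i$, one gets a guaranteed saving of $d(z,z\vee x_j)$ for \emph{every} $i\in\pxi$ and a loss of exactly $d(z,z\vee x_j)$ for every $i\in\bxi$, whence $r(z\vee x_j,\xi)\leq r(z,\xi)-(|\pxi|-|\bxi|)\cdot d(z,z\vee x_j)<r(z,\xi)$. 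This is the missing idea in your argument; without it (or an equally effective substitute), part (i) is not proved.
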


\begin{proof} In order to prove part \eqref{thmmaina}, 
let $L$ be a semimodular lattice of finite length with breadth 2. Let $\xi=(x_1,\ldots, x_k)\in L^k$ and $z\in L$ with $z\not\leq c_1(\xi)$; we need to show that $z\notin M(\xi)$. 
If $k=2$, then $z\not\in M(\xi)$ follows from Lemma \ref{lem3}. From now on we will assume that $k\geq3$.
With the notation of \eqref{pbxbxNDpx}, 
$|\pxi|\leq|\bxi|$ implies $z\not\in M(\xi)$ by Lemma \ref{AleqB}.
Now suppose that $|\pxi|>|\bxi|$. Consider the set $T=\{z\vee x_i : i\in\pxi \}$. Let $z\vee x_i,z\vee x_j\in T$. Breadth 2 implies that $(z\vee x_i)\vee (z\vee x_j)=z\vee x_i\vee x_j\in \{x_i\vee x_j,z\vee x_i,z\vee x_j\}$. Note that $z\vee x_i\vee x_j=x_i\vee x_j$ would imply that $z<x_i\vee x_j\leq c_1(\xi)$, a contradiction. So $(z\vee x_i)\vee (z\vee x_j)\in \{z\vee x_i,z\vee x_j\}$. 
Thus  $T$ is a chain; let $z\vee x_j$ be its least element.

We claim that for each $x_i\in\set\xi$ with  $x_i\parallel z$ (that is, for each $i\in\pxi$), 
\begin{equation}
d(z\vee x_j,x_i)\leq d(z,x_i)-d(z,z\vee x_j).
\label{eqbzHfdskWg}
\end{equation}
To see this consider that for each $i\in\pxi$ we have that
\begin{align*}
d(z,x_i) & \eqeqref{eqPndskszD}   d(z,z\vee x_i)+d(z\vee x_i,x_i)\\
           & \eqeqref{alignchainddd}  d(z,z\vee x_j)+d(z\vee x_j,z\vee x_i)+d(z\vee x_i,x_i).
\end{align*}
Hence $d(z,x_i)-d(z,z\vee x_j)  =   d(z\vee x_j,z\vee x_i)+d(z\vee x_i,x_i)$, which implies \eqref{eqbzHfdskWg} by the triangle inequality.
Further, for each $x_i\in\set\xi$ with $x_i< z$ (that is, for $i\in\bxi$),
\begin{equation}
d(z\vee x_j,x_i) \eqeqref{alignchainddd}  d(z,x_i)+d(z,z\vee x_j)
\label{eqwBbTktlBgrtZ}
\end{equation}
since $x_i<z < z\vee x_j$. Armed with \eqref{eqbzHfdskWg} and \eqref{eqwBbTktlBgrtZ}, we have that
\begin{align*}
r(z\vee x_j,\xi)  =  {}& \sum_{i\in\pxi} d(z\vee x_j,x_i)+ \sum_{i\in\bxi} d(z\vee x_j,x_i)\\
            \leq    {}& \sum_{i\in\pxi} d(z,x_i)-|\pxi|\cdot d(z,z\vee x_j){} + {}\\
              {}& \sum_{i\in\bxi} d(z,x_i)+|\bxi|\cdot d(z,z\vee x_j)\\
            =   {}&   r(z,\xi)-d(z,z\vee x_j)\cdot (|\pxi|-|\bxi|)\\
            <    {}&  r(z,\xi) \hspace{0.3in} \mbox{(since\ $d(z,z\vee x_j)>0$\ and\ $|\pxi|>|\bxi|$).}
\end{align*}
Hence $r(z\vee x_j,\xi)<r(z,\xi)$, and so $z\not\in M(\xi)$.
This proves part \eqref{thmmaina}.

Part \eqref{thmmainb} of the theorem follows from Lemma~\ref{lemmaProdBr} or from Remark~\ref{remarkGlS}.

Next, to prove part \eqref{thmmainc}, assume that $L:=L_1\times\dots\times L_t$ such that  $L_i$ is a lattice of finite length satisfying the $c_1$-median property for $i=1,\dots,t$. Clearly, we can assume that $t=2$ since then the case $t>2$ follows by a trivial induction. So, $L=L_1\times L_2$. We can assume that none of $L_1$ and $L_2$ is a singleton. We claim that  for any $x=(x_1,x_2)$ and $y=(y_1,y_2)$ in $L$, 
\begin{equation}
d(x,y)=d(x_1,y_1)+d(x_2,y_2).
\label{eqdStcmPnWs}
\end{equation}
To prove this, let $n:=d(x_1,y_1)$ and $m:=d(x_2,y_2)$. 
The \emph{neighboring relation} ``$\prec$'' $\cup$ ``$\succ$'' will be denoted by $\prec^\ast$.
By the definition of our distance function 
$d$, there are sequences
$x_1=a_0$, $a_1$, \dots, $a_n=y_1$ in $L_1$ and 
$x_2=b_0$, $b_1$, \dots, $b_m=y_2$ in $L_2$ such that
$a_i \prec^\ast_{L_1} a_{i+1}$  for all $i<n$ and $b_j\prec^\ast_{L_2} b_{j+1}$ for all $j<m$. 
Since the pair of any two consecutive members  of the sequence $x=(x_1,x_2)=(a_0,b_0)$, $(a_1,b_0)$, \dots $(a_n,b_0)$, $(a_n,b_1)$, \dots, $(a_n,b_m)=(y_1,y_2)=y$ belongs to  $\prec^\ast$, we obtain that  $d(x,y)\leq m+n=d(x_1,y_1)+d(x_2,y_2)$. 
Conversely, let $x=(x_1,x_2)=(u_0,v_0)$, $(u_1,v_1)$, \dots, $(u_s,v_s)=(y_1,y_2)=y$ be a sequence in $L$ such that
the pairs of its consecutive members 
belong to $\prec^\ast$. Let 
\begin{align*}
A&:=\set{i: 0\leq i<s,\,\, u_i \prec^\ast_{L_1} u_{i+1},\,\,  v_i = v_{i+1} } \text{ and}\cr
B&:=\set{i: 0\leq i<s,\,\, v_i \prec^\ast_{L_2} v_{i+1},\,\,  u_i = u_{i+1} }.
\end{align*}
It follows from \eqref{pbxPrCrlPr} that $\set{1,2,\dots, s}$ is the disjoint union of $A$ and $B$. In particular, $|A|+|B|=s$. Observe that  $\set{u_i: i\in A}$ is a sequence of $\prec^\ast_{L_1}$-neighboring elements 
from $x_1$ to $y_1$; for example, if $s=7$ and  $A=\set{2,4,5}$,  then this sequence is
$x_1=u_0=u_1=u_2\prec^\ast u_3=u_4\prec^\ast u_5\prec^\ast u_6=u_7=y_1$. Hence,  $n=d(x_1,y_1)\leq |A|$.
Similarly, $m=d(x_2,y_2)\leq |B|$. 
Thus  
$s=|A|+|B|\geq d(x_1,y_1) + d(x_2,y_2)$, and we conclude that $d(x,y)\geq d(x_1,y_1) + d(x_2,y_2)$, proving \eqref{eqdStcmPnWs}.

Next, for an arbitrary profile $\xi=(x\ssc1,\dots,x\ssc k)\in L^k$ and  $i\in\set{1,2}$, we let 
$\xi_i:=(x\ssc1_i, \dots, x\ssc k_i)\in L_i^k$. 
For every $y\in L$, \eqref{eqdStcmPnWs} gives that
\begin{equation}
r(y,\xi)=r(y_1,\xi_1) + r(y_2,\xi_2).
\label{eqzbtrrnHlsQx}
\end{equation}
Now assume that $y\in M(\xi)$, that is, $r(y,\xi)$ is minimal for \emph{this} $\xi$. Let $i\in\set{1,2}$.
If $r(y_1,\xi_1)$ was not minimal for $\xi_1$, then we could pick an element $y_1'\in L_1$ with $r(y_1',\xi_1)<r(y_1,\xi_1)$, we could take $\widehat y:=(y_1',y_2)$ in $L$, and we would have $r(\widehat y,\xi)<r(y,\xi)$ by \eqref{eqzbtrrnHlsQx}, contradicting the minimality of $r(y,\xi)$. Hence, $r(y_1,\xi_1)$ is minimal and $y_1\in M(\xi_1)$. 
Since the indices  1 and 2 play a symmetric role, we obtain in the same way that  $y_2\in M(\xi_2)$. 
Since $L_i$ satisfies the $c_1$-median property for $i\in\set{1,2}$, we obtain that $y_i\leq c_1(\xi_i)=x\ssc 1_i\vee \dots\vee x\ssc k_i$. Consequently,
$y\leq x\ssc 1\vee \dots \vee \ssc k$, which proves that $L$ satisfies the $c_1$-median property. 
The assertion on $\br(L)$ is Lemma~\ref{lemmaProDct}. 
Finally, \eqref{eqtxtWsrptRdzd} completes the proof of Theorem~\ref{thmmain}.
\end{proof}


\end{document}